\date{6 (19) February 2024}
\def\co{\colon\thinspace}
\newtheorem{theorem}{Theorem}
\newtheorem{lemma}{Lemma}
\newtheorem{proposition}{Proposition}
\newtheorem{corollary}{Corollary}
\theoremstyle{definition}
\newtheorem{definition}{Definition}
\newtheorem{example}{Example}
\newcommand{\K}{\mathbbm{k}}
\newcommand{\HH}{\mathbb{H}}
\newcommand{\la}{{\lambda}}
\newcommand{\der}[2]{{\frac{\partial {#1}}{\partial {#2}}}}
\DeclareMathOperator{\id}{id}
\newcommand{\RR}{\mathbb R}
\newcommand{\CC}{\mathbb C}
\newcommand{\al}{{\alpha}}
\newcommand{\be}{{\beta}}
\newcommand{\G}{{\Gamma}}
\newcommand{\e}{{\varepsilon}}
\renewcommand{\O}{{\Omega}}
\renewcommand{\o}{{\omega}}
\title[The weighted projective superspace with weights $+1, -1$]{The weighted projective superspace with weights $+1, -1$ and an analog of the Fubini--Study form}
\author{Ekaterina~Shemyakova}
\address{Department of Mathematics,  University of Toledo, Toledo,  Ohio, 43606, USA}
\email{ekaterina.shemyakova@utoledo.edu}
\author{Theodore~Voronov}
\address{Department of Mathematics,  University of Manchester, Manchester, M13 9PL,  UK}
\email{theodore.voronov@manchester.ac.uk}
\thanks{Research of the first author was partially supported by the NSF under grant  1708033 and by Simons Collaboration Grant. Research of the second author was partially supported by London Mathematical Society grants.}
\begin{document}
\begin{abstract}
As a by-product of our work on   super  Pl\"{u}cker embedding, we came to the notion of a weighted projective superspace $P_{+1,-1}(V\oplus W)$ with weights $+1,-1$. 
The construction is not in itself super and   makes sense in  ordinary  (purely even) framework.
Unlike  the  familiar  weighted projective spaces with positive weights, the (super)space $P_{+1,-1}(V\oplus W)$  is a  smooth (super)manifold. We describe its structure   and show that it possesses  an analog of the Fubini--Study form.
\end{abstract}

\maketitle
\tableofcontents


\section{Introduction}
This note is a by-product of our work on the  analog of the Pl\"{u}cker embedding for super Grassmannians~\cite{tv:pluecker} and \cite{shemyakova:2023}. For   general $G_{r|s}(n|m)$, with $r>0$ and $s>0$, the target of this embedding cannot be a usual projective superspace. We found that, instead, it has to be a particular analog of weighted projective spaces, namely, a ``weighted projective superspace''  with weights $1$ and $-1$. Our notation: $P_{+1,-1}(V\oplus W)$, where $V$ and $W$ are two given  vector superspaces. 

These     weights,  $\pm 1$,   come about from the homogeneity properties of   Berezinian and for our purpose were attached to what we introduced as ``super Plucker coordinates'' for a super Grassmannian. However, the construction of $P_{+1,-1}(V\oplus W)$  is independent of that purpose and is not peculiar for   supergeometric framework.

Weighted projective spaces with positive weights are well-known objects in topology and algebraic geometry. They are typically singular spaces, not manifolds. Unlike that, we show that the weighted projective (super)space $P_{+1,-1}(V\oplus W)$ is a smooth (super)manifold. Actually, it admits a nice description in terms of classical bundles. 

Our main results are: the  description   of $P_{+1,-1}(V\oplus W)$ via the tautological bundle over ordinary projective (super)spaces and a construction of a symplectic structure on $P_{+1,-1}(V\oplus W)$ analogous to the Fubini--Study form on $\CC P^n$.

Terminological remark: we often suppress the prefix `super-' writing e.g. `space' instead of `superspace' unless it is needed for emphasis.

\section{Definition and structure}
Let $V$ and $W$ be vector superspaces, which we regard as supermanifolds. (The reader not interested in supergeometry can think about usual vector spaces, manifolds, etc.) We consider vector spaces over a field $\K$, where $\K$ may be $\RR$ or $\CC$. (Some constructions   make sense for arbitrary $\K$. One can also consider the field of quaternions $\K=\HH$.) 

Important notation: for a vector (super)space $V$ or a (super) vector bundle $E$, by $V_0$ or $E_0$ respectively we denote the open subspace of all non-zero vectors. (In the supercase, this should not be confused with   parity;    points  of a vector superspace considered as a supermanifold are even vectors, so no need to indicate their parity.)

Consider the group $G=\K^*=\K_0$. Define its action on $V_0\times W_0$ by

\begin{equation}
    \la\cdot(v,w):=(\la  v,\la^{-1}w)\,,
\end{equation}
where $\la\in G$, $v\in V_0$ and $w\in W_0$.
\begin{definition} The \emph{weighted projective  space $P_{+1,-1}(V\oplus W)$ with weights $+1$ and $-1$} is defined as  the quotient
\begin{equation}
    P_{+1,-1}(V\oplus W):=(V_0\times W_0) /G
\end{equation}
by the above action of $G$.
\end{definition}

In the degenerate case $W=0$ or $V=0$ we obtain the usual projective spaces $P(V)$ or $P(W)$. Here and in the sequel $P(V)$ stands for the projectivization of a vector space $V$. We also denote by $E_V$ the tautological bundle over $P(V)$.

We denote points of $P_{+1,-1}(V\oplus W)$ as $[v\|w]$, where $v\in V_0$, $w\in W_0$,  and 
\begin{equation*}
  [v\|w]=[\la v\|\la^{-1}w]\,.
\end{equation*}
It follows that $[\la v\|w]=[\la v\|\la^{-1}(\la w)]=[v\|\la w]$\,. The pair $(v,w)\in V_0\times W_0$ can be regarded as \emph{homogeneous coordinates} of a point $[v\|w]. $

\begin{example} For $W=\K$, we have   $P_{+1,-1}(V\oplus \K)\cong V_0$. Note that $V_0$ can be identified with $(E_V)_0$ for the tautological bundle $E_V\to P(V)$.  
\end{example}

\begin{theorem} For arbitrary $V$ and $W$, the space $P_{+1,-1}(V\oplus W)$ has natural fiber bundle structures over $P(V)$ and $P(W)$\,:
  \begin{equation}
    \begin{tikzcd}
 & P_{+1,-1}(V\oplus W) \arrow[dl, "\pi_V" above left] \arrow[dr,"\pi_W"] \\
P(V)    && P(W)\,.
\end{tikzcd}
\end{equation}
The bundle $\pi_V\co P_{+1,-1}(V\oplus W)\to P(V)$ is isomorphic to the bundle $(E_V\otimes W)_0\to P(V)$ and the bundle $\pi_W\co P_{+1,-1}(V\oplus W)\to P(W)$ is isomorphic to   $(E_W\otimes V)_0\to P(W)$\,.
\end{theorem}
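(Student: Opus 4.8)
The plan is to construct the two projections and then identify each fiber bundle explicitly; by symmetry it suffices to treat $\pi_V$, since $\pi_W$ is obtained by swapping the roles of $V$ and $W$ (note $\la\cdot(v,w)=(\la v,\la^{-1}w)$ becomes the analogous action after relabelling $\mu=\la^{-1}$). First I would define $\pi_V\co P_{+1,-1}(V\oplus W)\to P(V)$ by $[v\|w]\mapsto [v]$, where $[v]\in P(V)$ is the usual projective class. This is well defined: if $[v\|w]=[\la v\|\la^{-1}w]$, then the first component changes by the scalar $\la$, so its projective class is unchanged. It is clearly surjective (for any $v\in V_0$ pick any $w\in W_0$), and smooth because it is induced on quotients from the map $V_0\times W_0\to P(V)$.

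The heart of the argument is identifying $\pi_V$ with $(E_V\otimes W)_0\to P(V)$. Recall $E_V\subset P(V)\times V$ is the tautological bundle, whose fiber over $[v]$ is the line $\K v\subset V$; hence the fiber of $E_V\otimes W$ over $[v]$ is $(\K v)\otimes W$. The key observation is that a point of $P_{+1,-1}(V\oplus W)$ lying over $[v]$ is an equivalence class $\{(\la v,\la^{-1}w):\la\in\K^*\}$, and to such a class I want to associate the tensor $v\otimes w\in (\K v)\otimes W$. This is well defined and injective on fibers precisely because $(\la v)\otimes(\la^{-1}w)=v\otimes w$, and conversely $v\otimes w=v'\otimes w'$ with $v,v'$ spanning the same line forces $v'=\la v$, $w'=\la^{-1}w$ for a unique $\la$. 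It lands in $(E_V\otimes W)_0$ since $v\neq 0$ and $w\neq 0$ give a nonzero tensor (here one uses that $W$ is a genuine vector space, so $v\otimes w\neq 0$). Globally, the map $V_0\times W_0\to E_V\otimes W$, $(v,w)\mapsto ([v],v\otimes w)$, is $G$-invariant and descends to the desired map; its inverse is constructed locally by choosing, over a trivializing chart of $E_V$, a nonvanishing section $v([\,\cdot\,])$ of $E_V$ and sending a tensor $t=v([x])\otimes w$ to $[v([x])\|w]$ — one checks this is independent of the chosen section and patches together.

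I would then verify this fiberwise bijection is a diffeomorphism (in the super setting, an isomorphism of supermanifolds) commuting with the projections to $P(V)$, i.e. a bundle isomorphism over $P(V)$; this is a local computation in trivializing charts for $E_V$, where both bundles become products of an open set in $P(V)$ with $W_0$. The main obstacle I anticipate is purely formal rather than conceptual: making the quotient $(V_0\times W_0)/G$ into a bona fide (super)manifold and checking smoothness of the constructed maps and their inverses in the supergeometric framework — one must argue that $G=\K^*$ acts freely and properly (or exhibit explicit charts directly, e.g. using affine charts $v_i=1$ on $P(V)$) so that the quotient and the bundle $(E_V\otimes W)_0$ carry compatible smooth structures. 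Once local triviality is set up on both sides, the identification $v\otimes w\leftrightarrow[v\|w]$ does all the work, and the statement for $\pi_W$ follows by the symmetric construction with $E_W\otimes V$.
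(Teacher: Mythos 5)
Your proposal is correct and follows essentially the same route as the paper: the paper's proof consists precisely of the observation that $[v\|w]\mapsto v\otimes w$ gives a well-defined identification with a non-zero point of $E_V\otimes W$ (resp.\ $V\otimes E_W$), which is exactly your key step based on $(\la v)\otimes(\la^{-1}w)=v\otimes w$. Your additional verifications (fiberwise injectivity, local triviality via sections of $E_V$, smoothness of the quotient) are details the paper leaves implicit, so nothing further is needed.
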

\begin{proof}
  A point $[v\|w]\in P_{+1,-1}(V\oplus W)$ is mapped to $v\otimes w$, which can be interpreted either as a vector in $E_V\otimes W$, and because $w\neq 0$, it is a non-zero vector, i.e. a point of $(E_V\otimes W)_0$, or, similarly with the roles of $V$ and $W$ swapped,  as  a non-zero vector in $V\otimes E_W$. 
\end{proof}

\begin{corollary}
There are two canonical atlases on $P_{+1,-1}(V\oplus W)$ obtained by lifting of the standard affine atlases for $P(V)$ and $P(W)$. 
\end{corollary}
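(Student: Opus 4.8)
The plan is to read the two atlases off the Theorem. Recall that, after choosing a basis $e_0,\dots,e_n$ of $V$, the standard affine atlas of $P(V)$ consists of the open sets $U^V_i=\{[v]:v^i\neq 0\}$ with affine coordinates $v^j/v^i$ for $j\neq i$; and that over each $U^V_i$ the tautological bundle $E_V$ carries the canonical nowhere-vanishing section $s_i\co[v]\mapsto v/v^i$, so that $E_V|_{U^V_i}$ is trivial of rank one. Hence $(E_V\otimes W)|_{U^V_i}\cong U^V_i\times W$ canonically, and therefore $(E_V\otimes W)_0|_{U^V_i}\cong U^V_i\times W_0$, which is an open subset of the affine (super)space $U^V_i\times W$. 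Composing with the bundle isomorphism $\pi_V\cong\bigl((E_V\otimes W)_0\to P(V)\bigr)$ of the Theorem produces charts $\pi_V^{-1}(U^V_i)\xto{\sim}U^V_i\times W_0$ on $P_{+1,-1}(V\oplus W)$; since the $U^V_i$ cover $P(V)$ and $\pi_V$ is onto, these cover the whole space. The same construction applied to $P(W)$ and $E_W$ gives the second atlas.

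Next I would record the first atlas explicitly in homogeneous coordinates, which both confirms the word ``lifting'' and makes the transition functions transparent. On $\pi_V^{-1}(U^V_i)$ one has $v^i\neq 0$, so the unique scalar $\la=1/v^i$ moves $(v,w)$ to the slice $\{v^i=1\}$; the residual stabilizer of this slice is trivial, so it is a genuine section of the $G$-action. The resulting coordinates are $(v^j/v^i)_{j\neq i}$ on the $P(V)$ factor together with the rescaled components $(v^iw^k)_k$ of $v^iw\in W$, subject only to $w\neq 0$ --- exactly the chart $U^V_i\times W_0$ found above.

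Finally I would verify smoothness of the coordinate changes. Between two $V$-charts $U^V_i,U^V_j$ the change of coordinates is the usual rational (hence smooth and invertible on the overlap) transition of $P(V)$ in the base directions, together with multiplication on the $W$ factor by the scalar transition function $v^i/v^j$ of $E_V$; the analogous statement holds inside the $W$-atlas. The only point needing a word is compatibility of the two atlases with each other: on an overlap $\pi_V^{-1}(U^V_i)\cap\pi_W^{-1}(U^W_j)$ both coordinate systems are expressed through the homogeneous coordinates $(v,w)$ by the smooth formulas just given, so the mixed transition maps are smooth as well. I expect this bookkeeping --- not any conceptual difficulty --- to be the only step requiring care, since everything reduces to the smooth structure on $(V_0\times W_0)/G$ already underlying the Theorem.
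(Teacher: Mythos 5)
Your proof is correct and follows exactly the route the paper intends: the Corollary is an immediate consequence of the Theorem's bundle isomorphisms $\pi_V\cong(E_V\otimes W)_0$ and $\pi_W\cong(E_W\otimes V)_0$, obtained by trivializing the tautological bundle over each standard affine chart. Your explicit coordinates $(v^j/v^i,\,v^iw^k)$ and the transition-function check are a useful (if optional) elaboration of what the paper leaves implicit.
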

Coordinates in each of these atlases are \emph{inhomogeneous coordinates} on $P_{+1,-1}(V\oplus W)$ (which   therefore are of two types). 

\section{Analog of Fubini--Study form}

In this section, $\K=\CC$. We want to construct an analog for our space $P_{+1,-1}(V\oplus W)$ of the classical Fubini--Study form on $P(V)$, i.e. $\CC P^n$  or      $\CC P^{n|m}$  in the supercase, see e.g.~\cite{tv:volumes}. 

Recall that for a complex vector (super)space $V$, the Fubini--Study form $\o$ on the projective space $P(V)$  is obtained by Hamiltonian reduction from the   symplectic  form $\o_0$ on $V$ with respect to the action of the group $S^1$. In brief, the procedure is as follows. Assuming $V$ is equipped with a Hermitian inner product, the form $\o_0$ is   as its imaginary part (up to a factor), while the real part gives a Euclidean structure. The unit sphere $S(V)\subset V$ arises as a  level set of the Hamiltonian generating the $S^1$-action on $V$. 
The quotient of $S(V)$ by the action of $S^1$ gives $P(V)$. The form $\o_0$ restricted on $S(V)$ is invariant under the action of the supergroup $\widehat{S^1}=\Pi TS^1$, hence descends on $P(V)$ giving the form   $\o$. 

In our case, we need to choose a symplectic structure on $V\oplus W$. The simplest choice would be
\begin{equation}\label{eq.omzero}
  \o_0=\frac{i}{2}(dvd\widebar v +dw d\widebar w)
\end{equation}
Here and in the sequel we write all the formulas as if we were in the purely even case, where the  expression  such as  $dvd\widebar v$ stands  for  the  sum   $dvd\widebar v=\sum dv^ad\widebar v^a$. (In the supercase, it should be $\sum_{\text{$a$ even}} dv^ad\widebar v^a- i \sum_{\text{$a$ odd}} dv^ad\widebar v^a$, see~\cite{tv:volumes}.) In principle, one can consider 
\begin{equation}\label{eq.omzeroalfa}
  \o_0=\frac{i}{2}(dvd\widebar v +\al dw d\widebar w)
\end{equation}
with $\al\in\RR$ instead. By rescaling, it is clear that only a choice between $\al=1$ and $\al=-1$ could make a difference. Using provisionally the symplectic structure depending on parameter $\al$, consider the Hamiltonian corresponding to the action of $S^1$. We have $(v,w)\mapsto (e^{it}v, e^{-it}w)$, so infinitesimally $(v,w)\mapsto (v+i\e v, w-i\e w)$, i.e. we have the vector field
\begin{equation*}
  X=iv^a\der{}{v^a}-i\bar v^a\der{}{\widebar{v^a}}- iw^k\der{}{w^k}-i\bar w^k\der{}{\widebar{w^k}}\,.
\end{equation*}
The corresponding Hamiltonian $H$ is found from 
\begin{multline*}
  -dH=\iota_X\o_0=\left(iv^a\der{}{dv^a}-i\bar v^a\der{}{d\bar v^a}- iw^k\der{}{dw^k}-i\bar w^k\der{}{d\widebar w^k}\right)\o_0=\\
  -\frac{1}{2}\left(\sum (v^ad\widebar{v^a}+\widebar{v^a}dv^a)-\al \sum (w^k d\widebar{w^k}+ dw^k \widebar{w^k})\right)=
  -\frac{1}{2}d\left(\sum v^a \widebar{v^a} -\al \sum w^k \widebar{w^k}\right)\,.
\end{multline*}
Hence we can take $H=\frac{1}{2}\left(\sum v^a \widebar{v^a} -\al \sum w^k \widebar{w^k}\right)$. 

\begin{proposition}
The level set $\G_{\al,\be}:=\{H=\be\}$ for arbitrary $\be\in\RR$ has intersection with every orbit of the action of $\CC^*$ if and only if $\al>0$.  
\end{proposition}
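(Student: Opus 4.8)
The plan is to analyze when the level set $\G_{\al,\be}=\{H=\be\}$, where $H=\frac12(\sum v^a\widebar{v^a}-\al\sum w^k\widebar{w^k}) = \frac12(|v|^2-\al|w|^2)$, meets every $\CC^*$-orbit in $V_0\times W_0$. First I would parametrize an orbit: fix $(v,w)\in V_0\times W_0$ and look at the point $(\la v,\la^{-1}w)$ for $\la\in\CC^*$. Writing $\la=re^{it}$ with $r>0$, the value of $H$ along the orbit depends only on $r$ (the phase $e^{it}$ drops out of $|v|^2$ and $|w|^2$), so the question reduces to whether the real-valued function
\begin{equation*}
  g(r)=\tfrac12\bigl(r^2|v|^2-\al r^{-2}|w|^2\bigr)
\end{equation*}
takes the value $\be$ for some $r>0$. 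Since $v\neq0$ and $w\neq0$ on $V_0\times W_0$, both $|v|^2>0$ and $|w|^2>0$.

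Next I would examine the range of $g$ on $(0,\infty)$ in the two cases. If $\al>0$, then as $r\to\infty$ we have $g(r)\to+\infty$ and as $r\to 0^+$ we have $g(r)\to-\infty$; since $g$ is continuous, by the intermediate value theorem $g$ is surjective onto $\RR$, so $\G_{\al,\be}$ meets this orbit for every $\be\in\RR$. (In fact $g$ is strictly increasing in $r$ when $\al>0$, so the intersection with each orbit is a single circle, but surjectivity is all that is claimed.) If $\al<0$, write $\al=-|\al|$; then $g(r)=\frac12(r^2|v|^2+|\al|r^{-2}|w|^2)$ is a sum of two positive terms, bounded below by its minimum value, which by AM--GM (or calculus) equals $\sqrt{|\al|\,|v|^2|w|^2}=\sqrt{|\al|}\,|v|\,|w|>0$. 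Hence for $\be$ below this positive bound — e.g. $\be=0$ or any $\be<\sqrt{|\al|}\,|v|\,|w|$ — the equation $g(r)=\be$ has no solution, so $\G_{\al,\be}$ misses that orbit. The case $\al=0$ is similar: $g(r)=\frac12 r^2|v|^2>0$ never attains $\be\le0$. This establishes the ``only if'' direction, and actually shows the failure is rather dramatic for $\al\le0$.

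I would then assemble the two directions: $\al>0$ gives intersection with every orbit for every $\be$, while $\al\le0$ produces orbits disjoint from $\G_{\al,\be}$ for suitable (in fact, many) $\be$. The main point to be careful about is the precise reading of the statement: it asserts that the property ``$\G_{\al,\be}$ meets every orbit'' holds \emph{for all} $\be\in\RR$ exactly when $\al>0$; so for the ``only if'' half it suffices to exhibit, for each $\al\le0$, some single $\be$ and some single orbit with empty intersection, which the computation above does (take $\be=0$). I do not expect any genuine obstacle here — the argument is a one-variable calculus fact once the orbit is parametrized — the only thing to watch is the supercase bookkeeping: per the paper's convention, $|v|^2=\sum v^a\widebar{v^a}$ should be read with the appropriate signs on odd coordinates as in~\cite{tv:volumes}, but since we are working on the underlying even manifold and $v,w$ range over nonzero even vectors, the positivity of $|v|^2$ and $|w|^2$ on $V_0\times W_0$ is unaffected and the reduction to $g(r)$ goes through verbatim.
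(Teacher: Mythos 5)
Your proof is correct and follows essentially the same route as the paper's: both reduce the question to a scalar equation in $|\la|$ along a $\CC^*$-orbit (the paper writes it as a quadratic in $|\la|^2$ and inspects the discriminant $\be^2+4\al(v\widebar{v})(w\widebar{w})$, while you analyze the range of the same function of $r=|\la|$ via the intermediate value theorem and AM--GM). If anything, your ``only if'' direction is slightly more explicit than the paper's, since you treat $\al=0$ separately and exhibit a concrete $\be$ and orbit with empty intersection, whereas the paper's one-line discriminant remark glosses over that boundary case.
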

\begin{proof} For arbitrary $(v,w)\in V_0\times W_0$ we need to find $\la\in \CC^*$ so that $\la(v,w)\in \G_{\al,\be}$. This boils down to a 
quadratic equation for $|\la|^2$ \,:
\begin{equation*}
  (v\widebar{v})|\la|^4-\be|\la|^2-\al(w\widebar{w})=0\,.
\end{equation*}
The discriminant is $\be^2+4\al (v\widebar{v})(w\widebar{w})$ is non-negative for all $v,w$ only if $\al>0$, and in this case it is $\geq \be^2$, hence there is always a positive root.
\end{proof}

So as explained we can set $\al=1$, which gives~\eqref{eq.omzero} as the symplectic form on $V_0\times W_0$. Define $\G_{\be}:=\G_{1,\be}$,
\begin{equation*}
  \frac{1}{2}\left(\sum v^a \widebar{v^a} - \sum w^k \widebar{w^k}\right)=\be\,.
\end{equation*}
Essentially there are the three cases:  $\G_{-1}$, $\G_{0}$ and $\G_{+1}$. We will consider the simplest, which is  $\G_{0}$,
\begin{equation*}
  \G_{0}=\left\{ \sum v^a \widebar{v^a} = \sum w^k \widebar{w^k}\right\}\subset V_0\times W_0\,.
\end{equation*}
Every $[v\|w]\in P_{1,-1}(V\oplus W)$ has a representative in $\G_0$, 
\begin{equation}\label{eq.r}
  (\la v,\la^{-1}w)\in \G_0 \quad\text{for} \quad  \la =\left(\frac{w\widebar{w}}{v\widebar{v}}\right)^{1/4}\,,
\end{equation}
and we arrive at the commutative diagram
\begin{equation}
    \begin{tikzcd}
    \G_0 \arrow[rr] \arrow[dr, "S^1" below left, "p" above right] & & V_0\times W_0 \arrow[dl, "\CC^*" below right, "q" above left]\\
    & P_{1,-1}(V\oplus W)
\end{tikzcd}
\end{equation}
where the left arrow is an analog of the Hopf fibration. 

\begin{lemma}
The restriction of the form $\o_0$ onto $\G_0$ is invariant under the action of the supergroup $\widehat{S^1}=\Pi TS^1$.
\end{lemma}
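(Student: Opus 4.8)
The plan is to recall what it means for a differential form to be invariant under the supergroup $\widehat{S^1}=\Pi TS^1$ and then verify the two conditions this entails. Invariance under $\Pi TS^1$ of a form $\o_0|_{\G_0}$ is equivalent to the conjunction of two statements: (i) ordinary invariance under the $S^1$-action, i.e. $L_X(\o_0|_{\G_0})=0$ where $X$ is the generator of the $S^1$-action restricted to $\G_0$; and (ii) invariance under the odd flow generated by $X$, which amounts to $\iota_X(\o_0|_{\G_0})=0$, i.e. the contraction of the restricted form with the generating vector field vanishes. (This is the standard characterization, see~\cite{tv:volumes}: a $\Pi TG$-invariant form is one that is both $G$-invariant and horizontal with respect to the $G$-action.)

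First I would note that $X$ is tangent to $\G_0$, since $\G_0=\{H=0\}$ is a level set of the Hamiltonian $H$ and $X$ is the Hamiltonian vector field of $H$ (up to sign), so $X(H)=\{H,H\}=0$; hence both $\o_0|_{\G_0}$ and the restriction of $X$ to $\G_0$ make sense and $\iota_X(\o_0|_{\G_0})=(\iota_X\o_0)|_{\G_0}$. Then for (ii): from the Hamiltonian computation already carried out in the excerpt, $\iota_X\o_0=-dH$, and restricting to $\G_0$ where $H$ is constant gives $(\iota_X\o_0)|_{\G_0}=-dH|_{\G_0}=0$. That settles horizontality. For (i): $\o_0$ is closed, so by Cartan's formula $L_X\o_0=d\iota_X\o_0+\iota_Xd\o_0=d(-dH)+0=0$ on all of $V_0\times W_0$, and this descends to the restriction to $\G_0$; alternatively one sees directly that $\o_0=\frac{i}{2}(dvd\widebar v+dwd\widebar w)$ is manifestly invariant under $(v,w)\mapsto(e^{it}v,e^{-it}w)$ since each summand is. Combining (i) and (ii) gives the claimed $\widehat{S^1}$-invariance.

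I expect the main subtlety — not an obstacle so much as a point demanding care — to be bookkeeping in the supercase: the vector field $X$, the form $\o_0$, and the contraction $\iota_X$ all carry the extra sign conventions flagged in the excerpt (the $-i\sum_{a\text{ odd}}dv^ad\widebar v^a$ correction and the corresponding signs in $X$), and one must check that the identity $\iota_X\o_0=-dH$ and the closedness $d\o_0=0$ survive with those conventions. Since the excerpt's convention is precisely to "write all formulas as if in the purely even case," I would carry out the argument in that even shorthand, remarking that the signs have been arranged (as in~\cite{tv:volumes}) so that the Hamiltonian formalism — Cartan's formula, $\iota_X\o_0=-dH$, $d\o_0=0$ — holds verbatim, and that the two-part criterion for $\Pi TS^1$-invariance is exactly what makes the restricted form descend to the Fubini–Study-type form on $P_{+1,-1}(V\oplus W)$ in the next step.
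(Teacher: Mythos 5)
Your proof is correct, but it takes a genuinely different (infinitesimal) route from the paper's. The paper proves the lemma by a direct finite computation: it writes out the lifted action of a generic element $(e^{it}, d\,e^{it})$ of $\widehat{S^1}=\Pi TS^1$ on $(v,dv,w,dw)$ and finds that the pullback of $\o_0$ equals $\o_0 + i\,dt\, d(v\widebar v - w\widebar w)$, where the extra term dies upon restriction to $\G_0$ (indeed to any level set $\G_\be$, as the paper notes). You instead invoke the standard characterization of $\Pi TG$-invariance as the basic-form condition $L_X\o_0|_{\G_0}=0$ and $\iota_X\o_0|_{\G_0}=0$, and then reuse the identity $\iota_X\o_0=-dH$ already established in the Hamiltonian computation together with $d\o_0=0$ and Cartan's formula. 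Both arguments are sound and amount to the finite versus infinitesimal form of the same fact (for connected $S^1$ the infinitesimal conditions do imply invariance under the whole supergroup, so no gap there); your version is economical in that it recycles the earlier computation and makes the tangency of $X$ to $\G_0$ explicit, while the paper's version exhibits the invariance by a one-line pullback calculation without appealing to the general criterion. Your closing remark about sign conventions in the supercase is an appropriate caveat and consistent with the paper's stated convention of writing formulas in even shorthand.
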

\begin{proof}
  The action of $\widehat{S^1}=\Pi TS^1$ is the lifting of the action $(v,w)\mapsto (e^{it}v,e^{-it}w)$, so that $(dv,dw)\mapsto (idte^{it}v+e^{it}dv, -idt e^{-it}w+e^{-it}w)=\bigl(e^{it}(idtv+dv), e^{-it} (-idtw+dw)\bigr)$. For the pullback of $\o_0$ by this transformation we obtain that after simplification  
  \begin{equation*}
    (e^{it},d\,e^{it})^*\left(dvd\widebar v +dw d\widebar w\right)=dvd\widebar v +dw d\widebar w +idt\,d(v\widebar v - w  \widebar w)=dvd\widebar v +dw d\widebar w
  \end{equation*}
  on $\G_0$ (actually, on any level set  $\G_{\be}$). 
\end{proof}

It follows, exactly as in the familiar situation of the projective superspace $P(V)$ such as $\CC P^n$ or $\CC P^{n|m}$, that there is a uniquely defined closed form $\o\in \O^2(P_{1,-1}(V\oplus W))$ whose pullback on $\G_0$ is the restriction of $\o_0$. It remains to give for it an explicit expression in local coordinates or its pullback on $V_0\times W_0$ (= expression in terms of homogeneous coordinates on $P_{1,-1}(V\oplus W)$). 

\begin{theorem}[symplectic form on  $P_{1,-1}(V\oplus W$] \label{thm.form}
The form $\o\in \O^2(P_{1,-1}(V\oplus W))$ is given by the following expression:
\begin{multline}\label{eq.omega}
  \o=\frac{i}{2}(v\widebar{v})^{-3/2} (w\widebar{w})^{-3/2}
  \left((v\widebar{v})(w\widebar{w})^2(dvd\widebar{v})+ (v\widebar{v})^2(w\widebar{w}) (dwd\widebar{w})  - \frac{1}{2} (w\widebar{w})^2(dv \widebar{v})(vd\widebar{v}) \right. \\
   \left.  - \frac{1}{2} (v\widebar{v})^2(dw \widebar{w})(wd\widebar{w})  + \frac{1}{2} (v\widebar{v}) (w\widebar{w})(dw \widebar{w})(\widebar{v} v)
  + \frac{1}{2} (v\widebar{v}) (w\widebar{w})(dv \widebar{v})(d\widebar{w} w)%
  \right)
\end{multline}
  where $v,w$ are regarded as homogeneous coordinates on $P_{1,-1}(V\oplus W)$.
\end{theorem}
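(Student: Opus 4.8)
The plan is to compute the pullback $q^*\omega$ on $V_0\times W_0$ directly and then verify the three defining properties: (i) $q^*\omega$ is the horizontal form determined by restricting $\omega_0$ to $\Gamma_0$ and pushing forward along $p$, which by the Lemma is well-defined; (ii) $q^*\omega$ is invariant under the $\CC^*$-action; (iii) $\iota_Y q^*\omega = 0$ for $Y$ tangent to the $\CC^*$-orbits. Since the preceding discussion already establishes that such an $\omega$ exists and is unique, it suffices to check that the explicit formula in~\eqref{eq.omega} is a closed $\CC^*$-basic form on $V_0\times W_0$ whose restriction to $\Gamma_0$ agrees with $\omega_0|_{\Gamma_0}$.

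First I would introduce the section $s\co V_0\times W_0 \to \Gamma_0$ given by the rescaling in~\eqref{eq.r}, namely $s(v,w) = (\lambda v, \lambda^{-1} w)$ with $\lambda = (w\widebar w / v\widebar v)^{1/4}$; note $s$ is constant on $\CC^*$-orbits and $p\circ s = q$, so $q^*\omega = s^*(\omega_0|_{\Gamma_0})$. Then the computation reduces to pulling back $\omega_0 = \tfrac{i}{2}(dv\,d\widebar v + dw\,d\widebar w)$ by the explicit map $s$. Writing $\lambda^4 = (w\widebar w)(v\widebar v)^{-1}$, one has $d(\lambda v) = (d\lambda) v + \lambda\,dv$ and similarly for the other three slots, with $d\lambda = \tfrac14\lambda\,d\log\bigl((w\widebar w)(v\widebar v)^{-1}\bigr) = \tfrac14\lambda\bigl((dw\widebar w + wd\widebar w)(w\widebar w)^{-1} - (dv\widebar v + vd\widebar v)(v\widebar v)^{-1}\bigr)$. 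Expanding $d(\lambda v)\,d(\widebar\lambda\,\widebar v) + d(\lambda^{-1}w)\,d(\widebar\lambda^{-1}\widebar w)$ and collecting terms — using that $\lambda$ is real so $\widebar\lambda = \lambda$, and that a two-form is antisymmetric so squares like $(dv\widebar v)(dv\widebar v)$ vanish and cross terms combine — should produce exactly the six terms of~\eqref{eq.omega} after multiplying through by $\tfrac{i}{2}$ and the overall factor $(v\widebar v)^{-3/2}(w\widebar w)^{-3/2}$ coming from the powers of $\lambda = \lambda^{\pm1}$ that appear. In parallel I would record that the right-hand side of~\eqref{eq.omega} is manifestly invariant under $(v,w)\mapsto(\mu v, \mu^{-1}w)$ for $\mu\in\CC^*$ (each monomial is homogeneous of degree zero: every $v$ is paired with a $\widebar v$ or cancelled against a $w$ via the exponents, and likewise for $w$), and that contracting with the generator $Y = v^a\partial_{v^a} - w^k\partial_{w^k} + (\text{conjugates})$ of the orbit directions annihilates it — this is the content of~\eqref{eq.r} being the slice, and it is the algebraic shadow of the Lemma.

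The main obstacle I anticipate is purely the bookkeeping in the expansion of $s^*\omega_0$: there are many terms of the shape (scalar one-form)$\wedge$(scalar one-form) where the scalars are themselves contractions like $dv\,\widebar v$, $v\,d\widebar v$, $\widebar v\,v$, and one must be careful with the noncommutativity of one-forms and with the fact that in the supercase (per the paper's convention) the bilinear forms $v\widebar v$ carry sign subtleties for odd coordinates. I would organize the computation by the bidegree in $(dv, d\widebar v, dw, d\widebar w)$ — there are terms of type $dv\,d\widebar v$, $dw\,d\widebar w$, $dv\,d\widebar w$, $dw\,d\widebar v$ — and check that the mixed types $dv\,d\widebar w$ and $dw\,d\widebar v$ cancel (they must, since~\eqref{eq.omega} has none), which is a good consistency check; the surviving $dv\,d\widebar v$ and $dw\,d\widebar w$ pieces then split into a ``diagonal'' part proportional to $dv\,d\widebar v$ (resp. $dw\,d\widebar w$) and ``rank-one'' corrections $(dv\,\widebar v)(v\,d\widebar v)$ and $(dv\,\widebar v)(\widebar v\,v)$ etc. Closedness of $\omega$ then follows for free, since $\omega_0$ is closed, $s^*$ commutes with $d$, and $q$ is a submersion so $q^*$ is injective on forms; alternatively one can cite that $\omega$ was already shown to exist as the descent of a closed form. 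Finally I would remark that specializing to $W = \K$ recovers, via the Example identifying $P_{+1,-1}(V\oplus\K)\cong V_0$, a known formula, giving one more sanity check.
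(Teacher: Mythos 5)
Your proposal follows the same route as the paper: both identify the composite $s = i\circ r$ (the rescaling onto the slice $\Gamma_0$), use $q^*\omega = s^*\omega_0$, and reduce the theorem to the explicit pullback computation, which the paper likewise leaves as a ``lengthy but straightforward calculation.'' Your additional organization of that computation (via $d\lambda = \tfrac14\lambda\,d\log\bigl((w\widebar w)(v\widebar v)^{-1}\bigr)$, bidegree bookkeeping, and the degree-zero/horizontality consistency checks) is sound and fills in detail the paper omits.
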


To obtain the expression in inhomogeneous coordinates (in either of the two canonical atlases), one can take formula~\eqref{eq.omega} and substitute $v^{a_0}=1, dv^{a_0}=0$ for a chosen even $v^{a_0}$ or  do the same with some even $w^{i_0}$. Here $a_0$ or $i_0$ is the number of an affine chart in one of the atlases. The remaining variables shall be regarded as inhomogeneous coordinates and the form of the resulting expression will be the same as~\eqref{eq.omega}.

\begin{proof}[Proof of Theorem~\ref{thm.form}]
Consider again the above diagram where we denoted by $i$ the inclusion map for $\G_0$ and introduced   $r$, which is the projection onto $\G_0$ given by~\eqref{eq.r}\,:
\begin{equation}
    \begin{tikzcd}
    \G_0 \arrow[shift left]{rr}{i} \arrow[dr, "p" below left] & & V_0\times W_0 \arrow[dl, "q" below right] \arrow[shift left]{ll}{r}\\
    & P_{1,-1}(V\oplus W)
\end{tikzcd}
\end{equation}
We have $r\circ i=\id$,  $p\circ r=q$,  and   $p^*\o=i^*\o_0$; hence $q^*\o=r^*p^*\o=r^*i^*\o_0=(i\circ r)^*\o_0$. Denote $s:=i\circ r$, $s\co V_0\times W_0\to  V_0\times W_0$. Explicitly,  
\begin{equation*}
  s\co (v,w)\mapsto  (\la v,\la^{-1}w) \quad\text{where} \quad  \la =\left(\frac{w\widebar{w}}{v\widebar{v}}\right)^{1/4}\,.
\end{equation*}
To obtain the desired expression for $\o$ in homogeneous coordinates, which is $q^*\o$, we need to calculate $s^*\o_0$. This is a lengthy, but straightforward calculation, which gives formula~\eqref{eq.omega}
\end{proof}

It would be interesting to consider similarly the Hamiltonian reduction with the help of $\G_{-1}$ and  $\G_{+1}$. 

\def\cprime{$'$} 

\end{document}